\newtheorem{thm}{Theorem}
\newtheorem{corollary}[thm]{Corollary}
\newtheorem{example}[thm]{Example}
\newtheorem{definition}[thm]{Definition}
\newtheorem{remark}[thm]{Remark}
\newtheorem{prop}[thm]{Proposition}
\newtheorem{claim}[thm]{Claim}
\newtheorem{observation}[thm]{Observation}
\newtheorem*{question}{Question}
\newenvironment{Remark}{\begin{remark}\rm}{\end{remark}}
\def\et{\quad\mbox{and}\quad}
\def\epsilon{\varepsilon}
\def\s{{\sigma}}
\def\alt{{\rm alt}}
\def\Kh{{\rm Kh}}
\def\dalt{{\rm dalt}}
\def\thick{{\rm width}}
\begin{document}
\date{\today}
\address{Boston College, Department of Mathematics, Maloney Hall, Chestnut Hill, MA 02467, United States}
\address{Mathematisches Institut, Universit\"at zu K\"oln, Weyertal 86-90, 50931 K\"oln, Germany}
\address{Fakult\"at f\"ur Mathematik, Universit\"at Regensburg, 93040 Regensburg, Germany}

\author{Peter Feller}
\author{Simon Pohlmann}
\author{Raphael Zentner}

\email{peter.feller.2@bc.edu}
\email{simonpohlmann@freenet.de}
\email{raphael.zentner@mathematik.uni-regensburg.de}
\thanks{The first author gratefully acknowledges support by the Swiss National Science Foundation Grant 155477: \emph{Positive Braids and Deformations}. The third author gratefully acknowledges support by the SFB `Higher Invariants' at the University of Regensburg, funded by the Deutsche Forschungsgesellschaft (DFG)}
\keywords{torus knots, alternating number, dealternating number, upsilon-invariant}
\subjclass[2010]{57M25,  57M27}
\title{Alternating numbers of torus knots with small braid index}
\begin{abstract} We calculate the alternating number of torus knots with braid index 4 and less. For the lower bound, we use the 
upsilon-invariant recently introduced by Ozsv\'ath, Stipsicz, and Szab\'o. For the upper bound, we use a known bound for braid index $3$ and a new bound for braid index $4$. Both bounds coincide, so that we obtain a sharp result.
\end{abstract}
\maketitle
\section{Introduction}
Kawauchi introduced the \emph{alternating number} $\alt(K)$ of a knot $K$\textemdash the minimal number of crossing changes needed to turn a diagram of $K$ into the diagram of an alternating knot~\cite{Kawauchi_10_OnAltNrOfLinks}.
Our main result determines the alternating number for all torus knots with braid index 4 or less.
\begin{thm}\label{thm:main}If $K$ is a torus knot of braid index 3 or 4,
then $\alt(K)=\lfloor \frac{1}{3}g(K)\rfloor$.

In other words,
for all positive integers $n$, we have
\[\alt(T_{3,3n+1})=\alt(T_{3,3n+2})=\alt(T_{4,2n+1})=n.\]
\end{thm}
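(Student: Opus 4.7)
The plan is to prove the equality by establishing matching upper and lower bounds, exactly as indicated in the abstract. For the upper bound, I would treat the two braid-index cases separately: for braid index three, the inequality $\alt(T_{3,3n\pm 1})\le n$ is already in the literature (through known constructions of crossing-change sequences, in the context of bounds on the dealternating number or the alternating number itself), so the task reduces to invoking that result. For braid index four, I would construct explicit diagrams of $T_{4,2n+1}$---starting from the standard braid word $(\s_1\s_2\s_3)^{2n+1}$ or a convenient modification---and identify a set of $n$ crossings whose simultaneous switch produces an alternating diagram. This is a hands-on diagrammatic construction, and I would expect a systematic pattern (for example, switching one crossing in every other ``syllable'' of the braid) to do the job.

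For the lower bound, I would use the upsilon invariant $\Upsilon_K(t)$ of Ozsv\'ath, Stipsicz, and Szab\'o, combining three ingredients. First, the explicit piecewise-linear formula for $\Upsilon_{T_{p,q}}$ due to Ozsv\'ath-Stipsicz-Szab\'o, evaluated at a carefully chosen $t_0 \in (0,2)$, where $|\Upsilon_{T_{p,q}}(t_0)|$ is as large as possible. Second, the crossing-change inequality: the change in $\Upsilon_K(t)$ under a single crossing change is bounded by a constant multiple of $\min(t,2-t)$. Third, the very restrictive form of the upsilon invariant for alternating knots, namely $\Upsilon_L(t) = -\tfrac{\sigma(L)}{2}\min(t,2-t)$ on $[0,2]$. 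Combining these, if $\alt(K) = n'$, then $\Upsilon_K(t_0)$ must lie within $Cn'\min(t_0,2-t_0)$ of the value $-\tfrac{\sigma(L)}{2}\min(t_0,2-t_0)$ for some alternating knot $L$. Using that the three-genus (and hence $|\sigma(L)|\le 2g(L)$) can change by at most one per crossing change, one converts this into a linear inequality in $n'$ that forces $n' \ge \lfloor g(K)/3\rfloor$.

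The step I expect to be the main obstacle is choosing $t_0$ correctly and then carrying out the upsilon arithmetic so that the resulting bound is \emph{sharp}: the natural candidates (such as $t_0=2/3$ in the braid-index-three case and $t_0=1/2$ in the braid-index-four case, or values dictated by the break points of the piecewise linear function $\Upsilon_{T_{p,q}}$) need to be pinpointed, and then the explicit formula for $\Upsilon_{T_{p,q}}(t_0)$ has to match the crossing-change/signature bound exactly at $n = \lfloor g(K)/3\rfloor$. The combinatorial upper-bound construction for braid index four should be straightforward to exhibit but requires some care to verify that exactly $n$ switches suffice to alternate the full braid.
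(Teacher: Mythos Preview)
Your overall strategy\textemdash matching upper and lower bounds\textemdash is exactly the paper's. But your plan diverges from the paper in two places, and one of them is a genuine pitfall.

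\textbf{Upper bound, braid index 4.} You aim to find $n$ crossing switches in a braid diagram of $T_{4,2n+1}$ that produce an \emph{alternating diagram}. That would establish $\dalt(T_{4,2n+1})\le n$, which is strictly stronger than what is needed and is in fact left as an open question in the paper. What the paper actually does is show that $n$ crossing changes (in a suitably isotoped diagram built from full-twist blocks) transform $T_{4,2n+1}$ into the alternating \emph{knot} $T_{2,2n+1}\,\#\,T_{2,2n+1}$; the resulting diagram is demonstrably \emph{not} alternating, since a crossing-count argument using the Kauffman--Murasugi--Thistlethwaite theorem rules that out. So your expectation that a simple syllable-by-syllable switching pattern yields an alternating diagram is likely to fail; the correct target is an alternating knot type, and recognising it as a connected sum of two $(2,2n+1)$ torus knots is the key geometric step.

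\textbf{Lower bound.} The paper's argument is considerably simpler than the route you sketch. Rather than evaluating $\Upsilon$ at breakpoints like $t_0=2/3$ or $t_0=1/2$ and then controlling the unknown signature $\sigma(L)$ of the target alternating knot via genus bounds, the paper takes $t_0=1$ and pairs $\upsilon=\Upsilon_K(1)$ directly with $-\tau$: both invariants satisfy the same crossing-change inequality and agree on alternating knots, so $|\tau(K)+\upsilon(K)|\le\alt(K)$ for every knot $K$. The computations $\tau(T_{4,2n+1})=3n$ and $\upsilon(T_{4,2n+1})=-2n$ (and analogously for braid index~$3$) then give the sharp bound $n$ immediately, with no need to track $\sigma(L)$ or optimise over $t_0$. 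Your approach might be made to work, but the bookkeeping you flag as ``the main obstacle'' is entirely avoided by this cleaner packaging.
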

The proof of Theorem~\ref{thm:main} consists of two parts. We use Ozsv\'ath, Stipsicz, and Szab\'o's
$\Upsilon$-invariant~\cite{OSS_2014} to improve previously known lower bounds for the alternating number.
The necessary upper bounds are provided by an explicit geometric construction in the case of braid index 4, and by Kanenobu's bound of \cite{Kanenobu_10_UpperBoundForAltOfTorusKnots}.
\\

Let us put Theorem~\ref{thm:main} in context. Torus knots with braid index $2$ are alternating; in other words, their alternating number is zero.
For torus knots with braid index 3, our result is a slight improvement on previous work of Kanenobu. In \cite{Kanenobu_10_UpperBoundForAltOfTorusKnots}, he established that
\[\alt(T_{3,3n+1})=\alt(T_{3,3n+2})=n\] for even positive integers $n$, whereas for odd integers $n$ he is left with the ambiguity that
\[\alt(T_{3,3n+1}),\alt(T_{3,3n+2})\in \{n-1,n\} .
\]

For torus knots of braid index $4$, Kanenobu established
\[n \leq \alt(T_{4,2n+1})\leq \frac{3}{2}n\et n-1\leq \alt(T_{4,2n+1})\leq \frac{3}{2}n-\frac{1}{2}\] for even and odd $n$, respectively~\cite{Kanenobu_10_UpperBoundForAltOfTorusKnots}. Therefore, Theorem~\ref{thm:main} improves both the previuosly existing lower and upper bound.
\\

A related knot invariant is the dealternating number $\dalt(K)$ of a knot. This number is the minimal number of crossing changes that one needs for turning a diagram of $K$ into an alternating {\em diagram}.
Clearly,
\[\alt(K)\leq\dalt(K)\] for all knots $K$. The dealternating number might appear less appealing at first sight. However, there exists the following interesting connection to quantum topology, due to Asaeda and Przytycki (reproved by Champanerkar-Kofman in \cite{Champanerkar-Kofman} with a spanning tree model for Khovanov homology): for all knots $K$,
\begin{equation}\label{eq:lowerboundKh}
\thick(\Kh(K))-2\leq\dalt(K),
\end{equation}
where $\Kh$ denotes the unreduced Khovanov homology~\cite{Asaeda-Przytycki}, and $\thick(\Kh(K))$ denotes the number of $\delta$-diagonals with $\delta$-grading greater or equal the lowest $\delta$-grading on which the Khovanov homology has support and less than or equal the highest $\delta$-grading on which Khovanov homology has support. The inequality~\eqref{eq:lowerboundKh} can be used to show that the alternating number differs from the dealternating number in general. For instance, any Whitehead double $W_K$ of a (non-trivial) knot $K$ has alternating number 1, while $\thick(\Kh(K))$ is in general larger than $3$ for Whitehead doubles.
\\

Using Turner's calculation of $\thick(\Kh)$ for torus knots of braid index three~\cite{Turner_08_ASpecSequForKh}, Abe and Kishimoto used inequality~\eqref{eq:lowerboundKh} to calculate the dealternating number for torus knots with braid index $3$.
However, the width $\thick(\Kh)$ is unknown for torus knots of braid index $4$. In fact, by 
work of Beheddi, one has $n+2\leq \thick(\Kh(T_{4,2n+1}))$, see\cite{Benheddi_PhD}, and, conjecturally, this is an equality.
\begin{question}
Does Theorem~\ref{thm:main} also hold for the dealternating number? In other words, are there geometric constructions similar to the ones provided below, that show $\dalt(T_{4,2n+1})=\alt(T_{4,2n+1}) =n$?
\end{question}
A positive answer would determine $\thick(\Kh(T_{4,2n+1}))$ to be $n+2$. This was part of the original motivation for the study conducted in this paper. However, it is impossible to immediately use the constructions for $\alt(T_{4,2n+1})\leq n$ presented in Section~\ref{sec:upperbunds} to show $\dalt(T_{4,2n+1})\leq n$; compare Remark~\ref{rem:daltisnotobvious}.
\\


\section{Lower bounds for the alternating number}\label{sec:lowerbounds}
In~\cite{Abe_09_EstOfAltOfTorusKnot}, Abe observed that
\[\frac{|s(K)-\s(K)|}{2}\leq\alt(K)\]
for all knots $K$, where $s$ and $\s$ denote Rasmussen's invariant~\cite{rasmussen_sInv} and Trotter's signature~\cite{Trotter_62_HomologywithApptoKnotTheory}, respectively. In fact, this lower bound works similarly with other knot invariants:

\begin{prop}\label{obs:lowerbounds}
Let $\psi_1$ and $\psi_2$ be any real-valued knot invariants such that
\begin{enumerate}[label=(\roman*)]
\item for all alternating knots
$\psi_1$ and $\psi_2$ are equal and
\item\label{eq:crossingchange}
if $K_+$ and $K_-$ are two knots such that $K_-$ is obtained from $K_+$ by changing a positive crossing to a negative crossing, then
\[\psi_i({K_-})-1 \leq \psi_i(K_+)\leq \psi_i({K_-})\] for $i=1,2$.
\end{enumerate}
Then for all knots $K$, we have
\[|\psi_1(K)-\psi_2(K)|\leq \alt(K).\]
\end{prop}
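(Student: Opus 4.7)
The plan is to run a telescoping argument along a sequence of crossing changes realizing $\alt(K)$, where the saving from $2\alt(K)$ down to $\alt(K)$ comes from observing that hypothesis \ref{eq:crossingchange} forces $\psi_1$ and $\psi_2$ to move in the \emph{same} direction at each step, so the difference $\psi_1-\psi_2$ only fluctuates by at most $1$ per crossing change (not by $2$).

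First I would fix $K$, set $n=\alt(K)$, and pick a diagram $D$ for $K$ together with a set $S$ of $n$ crossings whose simultaneous change yields a diagram of an alternating knot $K'$. Performing the changes in $S$ one at a time produces a sequence of knots $K=K_0, K_1, \dots, K_n=K'$ in which each $K_{j+1}$ differs from $K_j$ by a single crossing change; up to relabeling $K_+$ and $K_-$, hypothesis \ref{eq:crossingchange} applies to each consecutive pair and to both invariants $\psi_1,\psi_2$.

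Second, I would record the key one-step inequality. For a crossing change from $K_+$ to $K_-$ (positive to negative), hypothesis \ref{eq:crossingchange} gives $\psi_i(K_-)-\psi_i(K_+)\in[0,1]$ for $i=1,2$. Therefore
\[
\bigl|(\psi_1-\psi_2)(K_-)-(\psi_1-\psi_2)(K_+)\bigr|
=\bigl|(\psi_1(K_-)-\psi_1(K_+))-(\psi_2(K_-)-\psi_2(K_+))\bigr|\le 1,
\]
since the difference of two numbers in $[0,1]$ lies in $[-1,1]$. The same bound holds when the crossing change goes the other direction, by symmetry. Hence $|(\psi_1-\psi_2)(K_j)-(\psi_1-\psi_2)(K_{j+1})|\le 1$ for every $j=0,\dots,n-1$.

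Finally, I would combine these $n$ estimates with hypothesis (i). Since $K'=K_n$ is alternating, $(\psi_1-\psi_2)(K_n)=0$, and the triangle inequality gives
\[
|\psi_1(K)-\psi_2(K)|=\bigl|(\psi_1-\psi_2)(K_0)-(\psi_1-\psi_2)(K_n)\bigr|
\le\sum_{j=0}^{n-1}\bigl|(\psi_1-\psi_2)(K_j)-(\psi_1-\psi_2)(K_{j+1})\bigr|\le n=\alt(K),
\]
which is the desired bound. There is no real obstacle here; the only thing to be careful about is not discarding the information that $\psi_1$ and $\psi_2$ change in a coordinated way at each crossing change, which is precisely what avoids the weaker bound $|\psi_1(K)-\psi_2(K)|\le 2\,\alt(K)$ obtained by bounding the two invariants separately.
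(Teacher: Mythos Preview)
Your proof is correct and follows essentially the same approach as the paper: a telescoping/induction argument along a sequence of crossing changes from an alternating knot to $K$, using that $|\psi_1-\psi_2|$ changes by at most $1$ at each step. The paper's version is much terser, simply asserting that ``induction on $n$ shows that the difference $|\psi_1(K_n)-\psi_2(K_n)|$ can be at most $n$,'' whereas you spell out explicitly the key one-step inequality (that both $\psi_i(K_-)-\psi_i(K_+)$ lie in $[0,1]$, so their difference lies in $[-1,1]$) which is left implicit in the paper.
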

\begin{proof}
For $i= 0, \dots, n$, let $K_i$ be a sequence of knots such that for $i=1, \dots, n-1$ the knot $K_{i+1}$ results from $K_i$ through a crossing change, and such that $K_0$ is alternating. Induction on $n$ shows that the difference $|\psi_1(K_n) - \psi_2(K_n)|$ can be at most $n$.
\end{proof}
For Ozsv\'ath and Szab\'o's $\tau$-invariant the negative $\psi_1(K)= -\tau(K)$ satisfies~\ref{eq:crossingchange} from Proposition~\ref{obs:lowerbounds}; see~\cite{OzsvathSzabo_03_KFHandthefourballgenus}.
Similarly, the invariant $\psi_2(K) = \Upsilon_K(1) = \upsilon(K)$ does satisfy~\ref{eq:crossingchange}, and
\[\psi_1(A)= -\tau(A)= \Upsilon_A(1) = \upsilon(A)=\psi_2(A) \]
for all alternating knots $A$.
Here $\Upsilon_K(t)$ (denoted by $\upsilon(K)$ when $t=1$) is the real valued knot-invariant (depending piecewise-linearly on a parameter $t$ in $[0,2]$) introduced by Ozsv\'ath, Stipsicz, and Szab\'o~\cite{OSS_2014}. Therefore, we get the following.

\begin{corollary}\label{cor:lowerbound}
 For all knots $K$, 
 we have
  \[
  	\left|\tau(K) + \upsilon(K) \right| \leq \alt(K) \, .
  \]
\end{corollary}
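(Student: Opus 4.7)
The plan is to invoke Proposition~\ref{obs:lowerbounds} with the choice $\psi_1(K)=-\tau(K)$ and $\psi_2(K)=\upsilon(K)$. The paragraph preceding the corollary already records the two ingredients needed to check the hypotheses: that the crossing-change inequality~\ref{eq:crossingchange} holds for $-\tau$ (after taking the negative of the standard inequality of Ozsv\'ath--Szab\'o for $\tau$) and for $\upsilon$ (as established by Ozsv\'ath--Stipsicz--Szab\'o), and that on any alternating knot $A$ one has $-\tau(A)=\Upsilon_A(1)=\upsilon(A)$. So the proof really amounts to a citation of the relevant facts from the literature followed by a one-line application of the proposition.

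Once the hypotheses are verified, Proposition~\ref{obs:lowerbounds} immediately yields
\[
|\psi_1(K)-\psi_2(K)| \;=\; |-\tau(K)-\upsilon(K)| \;=\; |\tau(K)+\upsilon(K)| \;\leq\; \alt(K),
\]
which is the claimed bound.

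I do not foresee any real obstacle; the only point that deserves a moment of care is the sign convention. The crossing-change hypothesis~\ref{eq:crossingchange} demands $\psi_i(K_-)-1\leq\psi_i(K_+)\leq\psi_i(K_-)$, whereas $\tau$ itself satisfies $\tau(K_+)-1\leq\tau(K_-)\leq\tau(K_+)$; negating turns this exactly into the inequality required of $-\tau$, and the analogous check for $\upsilon$ is straightforward from the reference. Thus the corollary follows formally from the proposition and needs no new geometric input.
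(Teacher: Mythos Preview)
Your proposal is correct and follows exactly the approach the paper takes: the corollary is deduced immediately from Proposition~\ref{obs:lowerbounds} applied to $\psi_1=-\tau$ and $\psi_2=\upsilon$, with the hypotheses supplied by the facts recorded in the preceding paragraph. Your attention to the sign convention for $-\tau$ versus $\tau$ in the crossing-change inequality is the only subtlety, and you handle it correctly.
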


We note that other invariants rather than $\tau$ can be used and will yield the same lower bounds for the alteranting  number on torus knots; for example, Rasmussen's $s$-invariant or any concordance invariant with the properties described in~\cite[Theorem~1]{Livingston_Comp}. The  $\tau$-invariant seems to be the canonical choice to work with since $\Upsilon$ is a generalization of it: indeed, one has $-\tau=\lim_{t
{\to}0}
\frac{\Upsilon(t)}{t}$; see \cite[Proposition~1.6]{OSS_2014}.

\begin{prop}\label{lower bound}
For all positive integers $n$, we have the following bounds for the alternating number.
\begin{align*}
n \leq \alt(T_{3,3n+1}),\quad n  \leq\alt(T_{3,3n+2}), \et n  \leq \alt(T_{4,2n+1}).
\end{align*}
\end{prop}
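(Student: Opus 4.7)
The plan is to apply Corollary~\ref{cor:lowerbound}, which asserts $|\tau(K)+\upsilon(K)|\le\alt(K)$, and to show that $\tau(K)+\upsilon(K)=n$ for each of the three families.

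Since torus knots are closures of positive braids, Ozsv\'ath-Szab\'o's theorem gives $\tau(T_{p,q})=g(T_{p,q})=(p-1)(q-1)/2$, whence $\tau(T_{3,3n+1})=3n$, $\tau(T_{3,3n+2})=3n+1$, and $\tau(T_{4,2n+1})=3n$.

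The main task is therefore the computation of $\upsilon(T_{p,q})=\Upsilon_{T_{p,q}}(1)$. Since torus knots are $L$-space knots, their $\Upsilon$-function is fully determined by the staircase complex of the Alexander polynomial (equivalently by the numerical semigroup $\langle p,q\rangle$) through the formula of Ozsv\'ath-Stipsicz-Szab\'o. Equivalently, one may use the pointwise additive recursion $\Upsilon_{T(p,q+p)}(t)=\Upsilon_{T(p,q)}(t)+\Upsilon_{T(p,p+1)}(t)$ of Feller-Krcatovich and specialize to $t=1$. Starting from the base values $\upsilon(T_{3,2})=-1$, $\upsilon(T_{3,4})=-2$, $\upsilon(T_{4,3})=-2$ (which coincides with $\upsilon(T_{3,4})$), and $\upsilon(T_{4,5})=-4$, each extractable from the explicit piecewise-linear description of $\Upsilon$ for the corresponding small torus knot, induction yields
\[
\upsilon(T_{3,3n+1})=-2n,\quad \upsilon(T_{3,3n+2})=-(2n+1),\quad \upsilon(T_{4,2n+1})=-2n.
\]
Combined with the $\tau$-values, $\tau(K)+\upsilon(K)=n$ in all three cases, and Corollary~\ref{cor:lowerbound} yields $n\le\alt(K)$ as claimed.

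The main difficulty I anticipate is verifying the base cases, in particular $\upsilon(T_{4,5})=-4$, since $T_{4,5}$ has genus $6$ and its staircase complex is genuinely nontrivial. A secondary bookkeeping point concerns the family $T_{4,2n+1}$: the $p=4$ recursion increments $n$ by $2$ at each step, so the two base cases $\upsilon(T_{4,3})=-2$ and $\upsilon(T_{4,5})=-4$ are needed to cover odd and even $n$ respectively. Beyond these two points the argument is a direct induction.
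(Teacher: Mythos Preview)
Your proposal is correct and follows essentially the same approach as the paper: apply Corollary~\ref{cor:lowerbound} and compute $\tau+\upsilon=n$ in each case. The only difference is cosmetic: the paper obtains the values of $\upsilon$ by citing the Ozsv\'ath--Stipsicz--Szab\'o procedure for $L$-space knots (with the explicit calculation deferred to~\cite[Proposition~28]{Feller_15_MinCobBetweenTorusknots}), whereas you derive them via the Feller--Krcatovich additivity recursion $\Upsilon_{T_{p,q+p}}=\Upsilon_{T_{p,q}}+\Upsilon_{T_{p,p+1}}$ from small base cases; both routes yield the same numbers and the argument is otherwise identical.
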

\begin{proof}
This is immediate from calculating $|\tau+\upsilon|$ for the involved knots.
On positive torus knots $\tau$ equals the three-genus:
\[\tau(T_{p,q})=\frac{(p-1)(q-1)}{2},\] for all coprime positive integers $p$ and $q$; see~\cite[Corollary~1.7]{OzsvathSzabo_03_KFHandthefourballgenus}.
For torus knots (and more generally $L$-space knots) Ozsv\'ath, Stipsicz, and Szab\'o~\cite[Theorem~1.15]{OSS_2014} provided a procedure to calculate $\Upsilon(t)$ from the Alexander polynomial. With this procedure one calculates
\[\upsilon(T_{3,3n+1})=-2n=\upsilon(T_{4,2n+1})\et \upsilon(T_{3,3n+1})=-2n-1,\] for all $n$; compare~\cite[Proposition~28]{Feller_15_MinCobBetweenTorusknots}, where this tedious but elementary calculation is provided.
The values for $\tau$ and $\upsilon$ combined yield
\begin{align*}
|\tau(T_{3,3n+1})+\upsilon(T_{3,3n+1})|&=3n-2n=n\\
|\tau(T_{3,3n+2})+\upsilon(T_{3,3n+2})|&=3n+1-2n-1=n\\
|\tau(T_{4,2n+1})+\upsilon(T_{4,2n+1})|&=3n-2n=n.
\end{align*}
This concludes the proof since $|\tau+\upsilon|$ is a lower bound for the alternating number by Corollary~\ref{cor:lowerbound}.
\end{proof}
\section{Upper bounds for the alternating number}\label{sec:upperbunds}
For torus knots with braid index $3$, upper bounds for the alternating number where calculated by Kanenobu~\cite{Kanenobu_10_UpperBoundForAltOfTorusKnots}; compare also~\cite{Feller_14_GordianAdjacency}, where this is recovered from a different perspective. Abe and Kishimoto showed that the same upper bounds hold for the dealternating number~\cite{AbeKishimoto_Dealtof3braids}.
\begin{prop}[{\cite[Theorem~8]{Kanenobu_10_UpperBoundForAltOfTorusKnots}},{\cite[Theorem~2.5]{AbeKishimoto_Dealtof3braids}}]
\label{abe-kishimoto}
For all positive integers $n$,
\[\alt(T_{3,3n+1}) \leq \dalt(T_{3,3n+1})\leq n \et \alt(T_{3,3n+2}) \leq \dalt(T_{3,3n+2})\leq n.\]
\end{prop}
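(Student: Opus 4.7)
The inequalities $\alt(K)\le \dalt(K)$ are immediate from the definitions, so the substance is the upper bounds $\dalt(T_{3,3n+1})\le n$ and $\dalt(T_{3,3n+2})\le n$. My plan is to build these diagrams by hand, starting from the standard positive braid presentation $T_{3,q}$ as the closure of $(\sigma_1\sigma_2)^q$ and producing, for each $q\in\{3n+1,3n+2\}$, an explicit set of $n$ crossings whose simultaneous change yields an alternating braid closure.

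The first step is to group the $q$ syllables $\sigma_1\sigma_2$ of the braid word into $n$ consecutive blocks of length three (each representing essentially one ``half'' of a full twist $\Delta^2=(\sigma_1\sigma_2)^3$), with either one or two remaining generators at the end depending on whether $q=3n+1$ or $q=3n+2$. In each block $\sigma_1\sigma_2\sigma_1$ or $\sigma_2\sigma_1\sigma_2$, I select one crossing to switch; the goal is to arrange, after these $n$ switches, an overall word in which consecutive generators alternate in type so that the corresponding diagram has overcrossings and undercrossings alternating along every strand. The second step is therefore a careful sign-tracking verification that the chosen crossings make the entire word behave like a power of $\sigma_1\sigma_2^{-1}$ (or a suitable conjugate), whose closure is well known to be alternating as a diagram, with the residual $1$ or $2$ positive generators at the end being compatible with the same alternation pattern after cyclic rearrangement.

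The main obstacle is precisely this sign-bookkeeping at the block boundaries and at the end of the word: choosing the $n$ crossings naively (for instance the last crossing of each block) need not produce a globally alternating diagram because of possible repeated strand-labels at the seams. To overcome this I would either stagger the selected crossings between odd- and even-indexed blocks using the braid relation $\sigma_1\sigma_2\sigma_1=\sigma_2\sigma_1\sigma_2$ to realign the words, or appeal to the explicit rational-tangle calculation carried out by Kanenobu in \cite{Kanenobu_10_UpperBoundForAltOfTorusKnots} and by Abe--Kishimoto in \cite{AbeKishimoto_Dealtof3braids}; in either case, once one block pattern is checked by inspection, the remaining ones follow by periodicity, and the counting of $n$ crossing changes for $q\in\{3n+1,3n+2\}$ is automatic.
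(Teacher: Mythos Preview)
The paper does not give its own proof of this proposition: it is stated with attribution to Kanenobu \cite{Kanenobu_10_UpperBoundForAltOfTorusKnots} and Abe--Kishimoto \cite{AbeKishimoto_Dealtof3braids} and used as a black box, so there is no argument in the paper to compare your attempt against. Your plan---change one crossing in each full-twist block of the $3$-braid $(\sigma_1\sigma_2)^q$ to obtain an alternating closure---is indeed the strategy those references carry out, and your fallback of ``appealing to the explicit calculation'' in those papers is exactly what the present paper does.

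That said, your sketch has some internal inconsistencies you would need to fix before it stands on its own. You first speak of grouping the $q$ syllables $\sigma_1\sigma_2$ into $n$ blocks of length three (so each block is $(\sigma_1\sigma_2)^3=\Delta^2$, a \emph{full} twist on three strands, not half of one), but then describe each block as ``$\sigma_1\sigma_2\sigma_1$ or $\sigma_2\sigma_1\sigma_2$'', which is only three generators, i.e.\ $\Delta$. You also say the target word should look like a power of $\sigma_1\sigma_2^{-1}$, but with $n$ crossing changes in a word of length $6n+2$ or $6n+4$ you cannot reach such a word; what Abe--Kishimoto actually obtain is an alternating diagram of a different shape. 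These are repairable, but as written the bookkeeping does not close, and you yourself flag the boundary/seam issue as the main obstacle without resolving it.
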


We provide new upper bounds for torus knots of braid index 4.

\begin{prop}\label{upper bound}
	Let $n \geq 2$ be an integer. There is a diagram of the torus knot $T_{4,2n+1}$ such that $n$ crossing changes yield the knot $T_{2,2n+1} \, \# \, T_{2,2n+1}$. In particular,
	\[ \alt(T_{4,2n+1}) \leq n . \]
\end{prop}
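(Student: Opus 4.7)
The plan is to exhibit an explicit diagram of $T_{4,2n+1}$ in which $n$ well-chosen crossing changes transform the knot into the connect sum $T_{2,2n+1}\,\#\,T_{2,2n+1}$. Since each $T_{2,k}$ is alternating and the connect sum of two alternating knots is alternating, this immediately yields $\alt(T_{4,2n+1})\leq n$. A sanity check is provided by the genera: $g(T_{4,2n+1})=3n$ whereas $g(T_{2,2n+1}\,\#\,T_{2,2n+1})=2n$, and since one crossing change can drop the genus by at most $1$, the target is the cheapest knot of this form that one could hope to reach using $n$ changes.

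The natural starting point is the standard diagram of $T_{4,2n+1}$ obtained as the closure of the positive $4$-braid $\beta=(\sigma_1\sigma_2\sigma_3)^{2n+1}$. I would flip $n$ crossings chosen symmetrically along $\beta$; the cleanest candidate is to flip the central $\sigma_2$-crossing in $n$ of the $2n+1$ syllables, arranged so that flipped and unflipped $\sigma_2$'s alternate along the braid. One then aims to show that the modified braid represents $T_{2,2n+1}\,\#\,T_{2,2n+1}$, for instance by reducing it via braid relations and a Markov destabilization to the $3$-braid $\sigma_1^{2n+1}\sigma_2^{2n+1}$, whose closure is the desired connect sum (a separating $2$-sphere isolating the two syllables witnesses the connect-sum decomposition into the closures of $\sigma_1^{2n+1}$ and $\sigma_2^{2n+1}$).

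The key step is then a sequence of applications of the braid relations $\sigma_i\sigma_{i+1}\sigma_i=\sigma_{i+1}\sigma_i\sigma_{i+1}$ and $\sigma_1\sigma_3=\sigma_3\sigma_1$, together with Reidemeister~II cancellations of the newly created $\sigma_2\sigma_2^{-1}$ pairs and a final destabilization on the fourth strand. The most natural organization is induction on $n$: verify the base case $n=2$ (namely $T_{4,5}\to T_{2,5}\,\#\,T_{2,5}$) by an explicit diagram chase or braid-word calculation, and then show that the inductive step reduces to a short local identity in which the addition of one extra block of the form $(\sigma_1\sigma_2\sigma_3)(\sigma_1\sigma_2^{-1}\sigma_3)$ corresponds, after the same simplifications, to advancing the target from $\sigma_1^{2n+1}\sigma_2^{2n+1}$ to $\sigma_1^{2n+3}\sigma_2^{2n+3}$.

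The main obstacle will be carrying out this simplification rigorously: the moves are all local, but the order in which they are applied and the bookkeeping of which $\sigma_2^{\pm1}$ cancels with which neighbour are delicate, and the specific choice of crossings to flip may need refinement (one could instead flip selected $\sigma_1$- or $\sigma_3$-crossings, or allow the pattern to depend on the parity of $n$) in order for the word to collapse cleanly. If the braid-word manipulation proves too cumbersome, an alternative route is to work entirely at the level of knot diagrams: perform the $n$ crossing changes on a carefully chosen projection of $T_{4,2n+1}$ and then directly isotope the result into the standard picture of two copies of $T_{2,2n+1}$ joined by an unknotted band, from which the braid-word argument can afterwards be read off.
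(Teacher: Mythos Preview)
Your target knot $T_{2,2n+1}\#T_{2,2n+1}$ and your count of $n$ crossing changes match the paper exactly, and your fallback suggestion---work with a well-chosen diagram rather than the standard braid word---is in fact what the paper does. The gap is in your primary line of attack: flipping $n$ of the $\sigma_2$'s in $(\sigma_1\sigma_2\sigma_3)^{2n+1}$ and then simplifying algebraically is never carried out, and there is reason to doubt it collapses as you hope. Each syllable $\sigma_1\sigma_2^{\pm 1}\sigma_3$ still induces the $4$-cycle $(1\,2\,3\,4)$ on the strands, so your two intended ``halves'' $\{1,2\}$ and $\{3,4\}$ are reshuffled at every step; it is not evident that alternating $\sigma_2$-flips separate them, and the inductive block identity you sketch would itself need a nontrivial argument.

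The paper's key idea, which you have not isolated, is to first isotope the braid so that the four strands are visibly organised into two \emph{bands} of two strands each. In this form each block of the braid decomposes as internal twisting within each band followed by the two bands crossing one another as planar ribbons. The $n$ crossing changes are made precisely in these band-over-band regions, and their sole effect is to let one band pass entirely in front of the other throughout; once that is done the two bands are unlinked and the closure is manifestly two copies of $T_{2,2n+1}$ joined along an arc, i.e.\ the connect sum. Your instinct that parity matters is correct---the paper treats $n$ even and $n$ odd with slightly different pictures---but the band viewpoint replaces the ``delicate bookkeeping'' you anticipate with a short sequence of diagrams. If you want to salvage an algebraic version, the right first move is to rewrite $(\sigma_1\sigma_2\sigma_3)^{2n+1}$ in cabled form (powers of $\sigma_1$ and $\sigma_3$ for the internal twists, times powers of the band-crossing word $\sigma_2\sigma_1\sigma_3\sigma_2$) \emph{before} deciding which crossings to flip.
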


\begin{Remark}\label{rem:toruslinkupperbound}
	Similarly, one can show that for $n\geq 2$ there is a diagram of the torus link $T_{4,2n}$ such that there are $n$ crossing changes which turn this torus link into an alternating link.
\end{Remark}
\begin{Remark}\label{rem:daltisnotobvious}
It is impossible that the diagram for $T_{2,2n+1} \, \# \, T_{2,2n+1}$ provided by Proposition~\ref{upper bound} is alternating. Indeed, assume towards a contradiction that there is a diagram $D_1$ for the torus knot $T_{4,2n+1}$ such that $n$ crossing changes yield an alternating diagram $D_2$ for the knot $T_{2,2n+1} \, \# \, T_{2,2n+1}$. We may assume that $D_1$ and $D_2$ are reduced diagrams.
Since the minimal crossing number of $T_{4,2n+1}$ is $6n+3$, the diagram $D_1$, and thus also $D_2$, has at least $6n+3$ crossings. 
However,
$T_{2,2n+1} \, \# \, T_{2,2n+1}$ has an alternating diagram with $4n+2$ crossings, which contradicts Tait's conjecture that two reduced alternating diagrams for the same knot have the same number of crossings proven by Kauffman, Murasugi and Thistlethwaite~\cite{Kauffman_87_StateModelsAndTheJonesPolynomial,Murasugi_87,Thistlethwaite_87}.
\end{Remark}
\begin{proof}[Proof of Proposition~\ref{upper bound}]
We think of the torus knots $T_{4,2n+1}$ as closures of braids. Using braid relations respectively an isotopy, we see that a `full twist' can be isotoped according to Figure~\ref{isotopy_full_twist}.

\begin{figure}[h!]
\caption{These are identical braids corresponding to isotopic diagrams relative to the ends. The left hand side is standard, the right hand side desription will be used later on.}
\label{isotopy_full_twist}
\def\svgwidth{0.5\columnwidth}
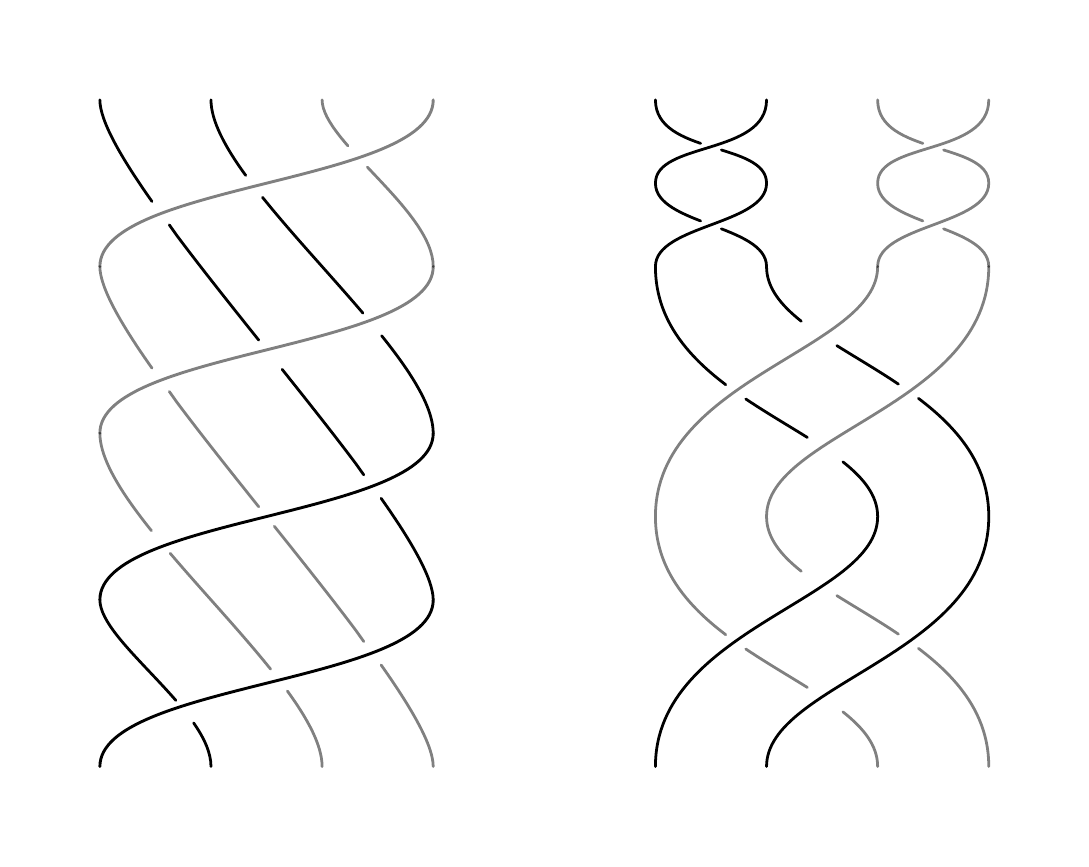
\end{figure}

Similarly, a full and a half twist can be isotoped according to Figure~\ref{isotopy_full_and_halftwist}. We notice a slight asymmetry in the two `bands' in this case.

\begin{figure}[h!]
\caption{Isotopy corresponding to a full and a half twist.}
\label{isotopy_full_and_halftwist}
\def\svgwidth{0.5\columnwidth}
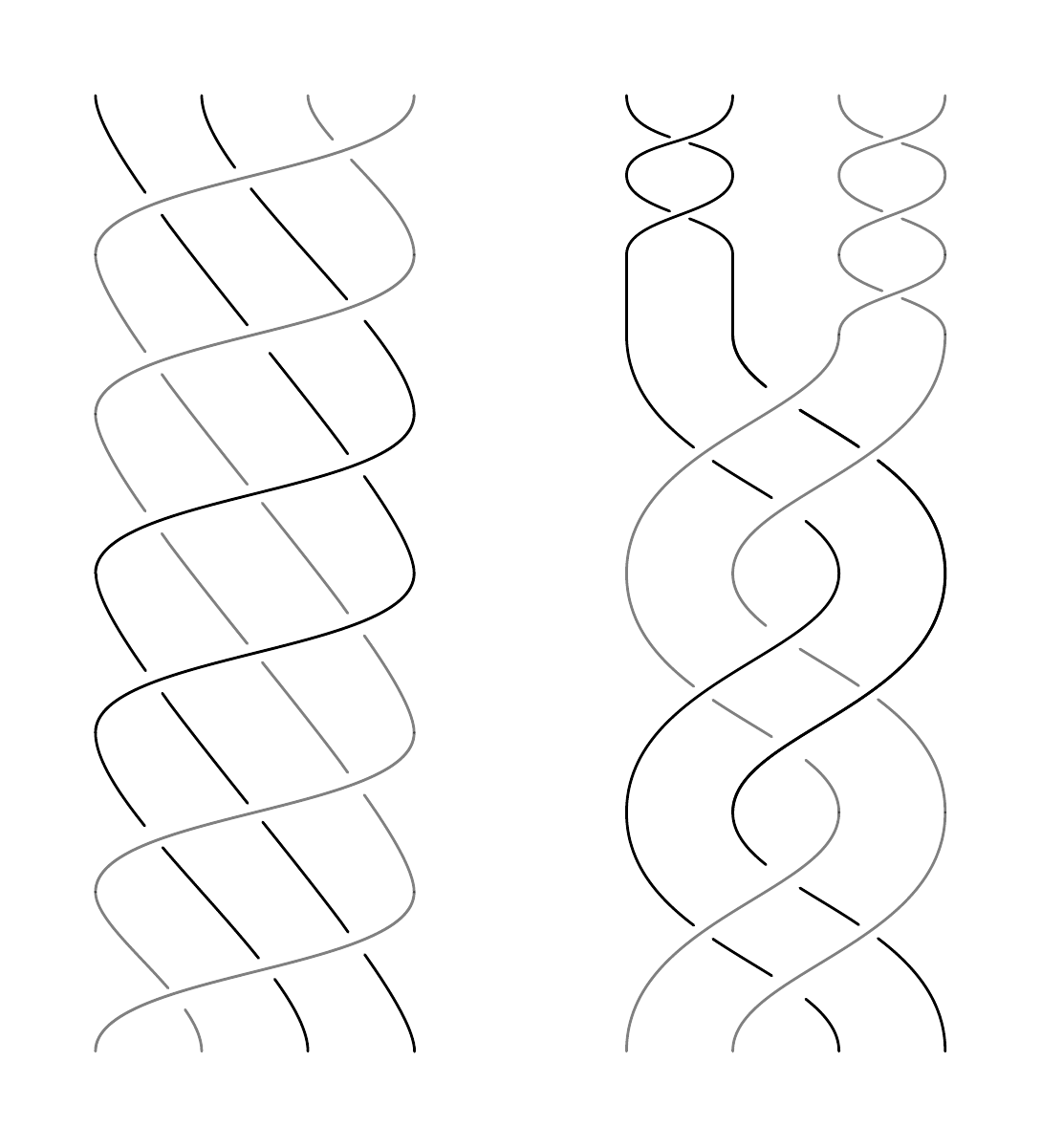
\end{figure}

We observe that these isotopies are compatible with iterations of full twists respectively multiplication of the braids corresponding to full twists. The result will be two bands which, when seen from the top to the bottom, both first twist, and then cross each other as planar bands.

Now in each full twist, we can find two crossing changes in the region where the bands cross with a geometric significance. Figure~\ref{crossingchanges_fulltwist} below shows how we can achieve the two red strands to pass in front of the two green strands. Similarly, Figure~\ref{crossingchanges_full_and_halftwist} shows how we can achieve the two green strands to pass in front of the two red strands.

\begin{figure}[h!]
\centering
\begin{minipage}{.4\textwidth}
  \centering
  \def\svgwidth{0.99\columnwidth}
  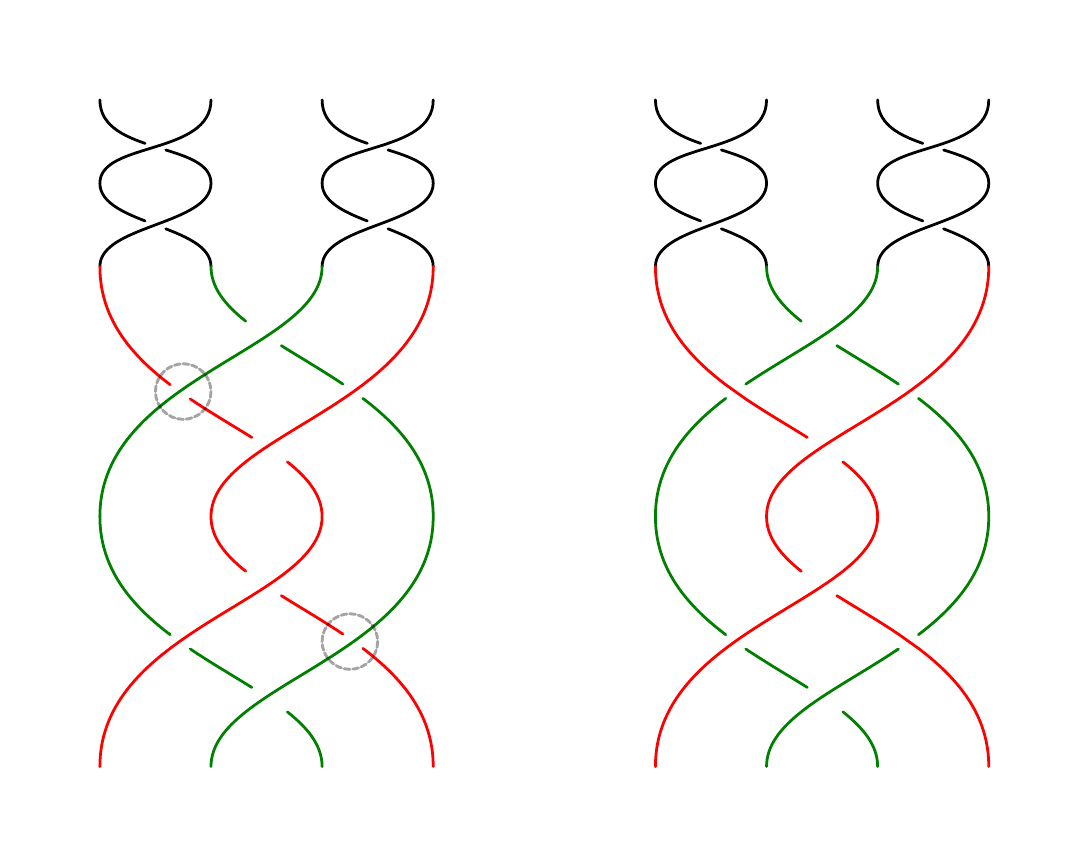
  \captionof{figure}{Two crossing changes bringing the red strands to the front}
  \label{crossingchanges_fulltwist}
\end{minipage}%
\begin{minipage}{.4\textwidth}
  \centering
  \def\svgwidth{0.99\columnwidth}
  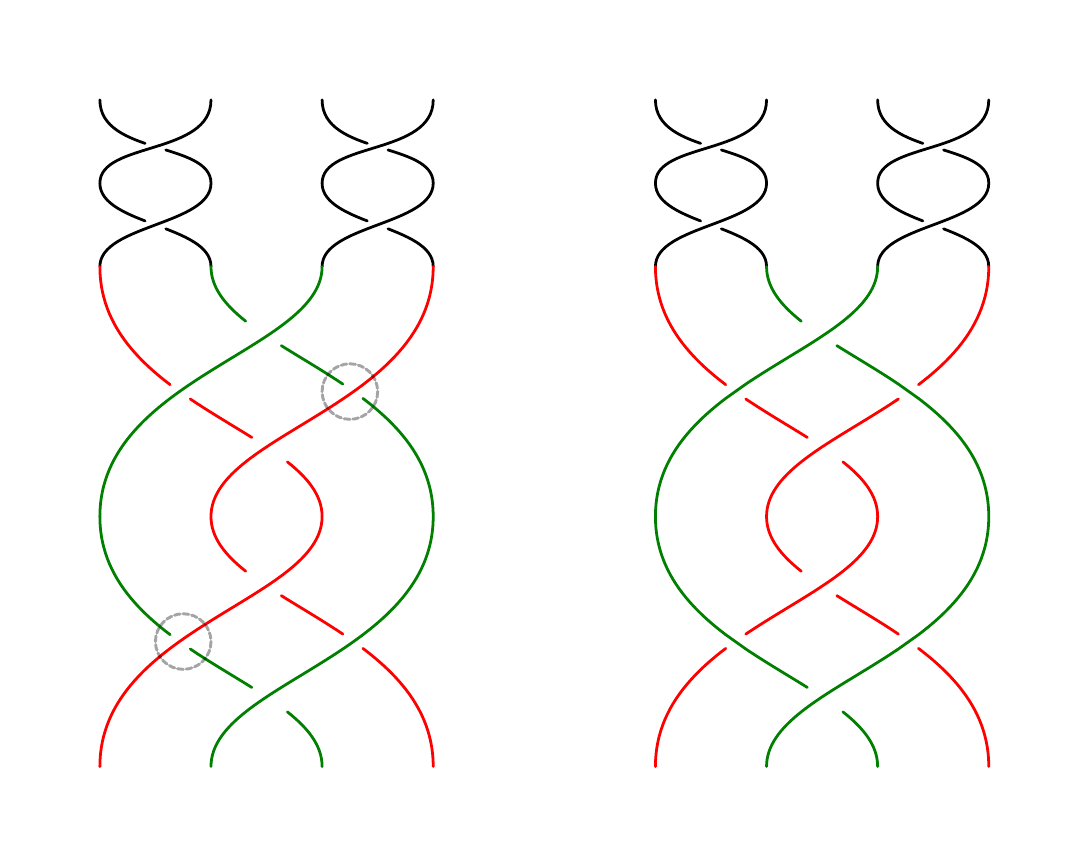
  \captionof{figure}{Two crossing changes bringing the green strands to the front}
  \label{crossingchanges_full_and_halftwist}
\end{minipage}
\end{figure}

Iterating this, we see that with $n$ crossing changes, we transform the braid corresponding to the torus knot $T_{4,2n+1}$ to the braid on the left hand side of Figure~\ref{both braids} if $n$ is even, and to the braid on the right han side if $n$ is odd. In the first case, we have used the crossing changes according to Figure~\ref{crossingchanges_fulltwist}, in the second case we have used those of Figure~\ref{crossingchanges_full_and_halftwist}.

\begin{figure}[h!]
\caption{After $n$ crossing changes, we obtain the braid on the left for $n$ even, and the one on the right for $n$ odd, starting from $T_{4,2n+1}$.}
\label{both braids}
\def\svgwidth{0.4\columnwidth}
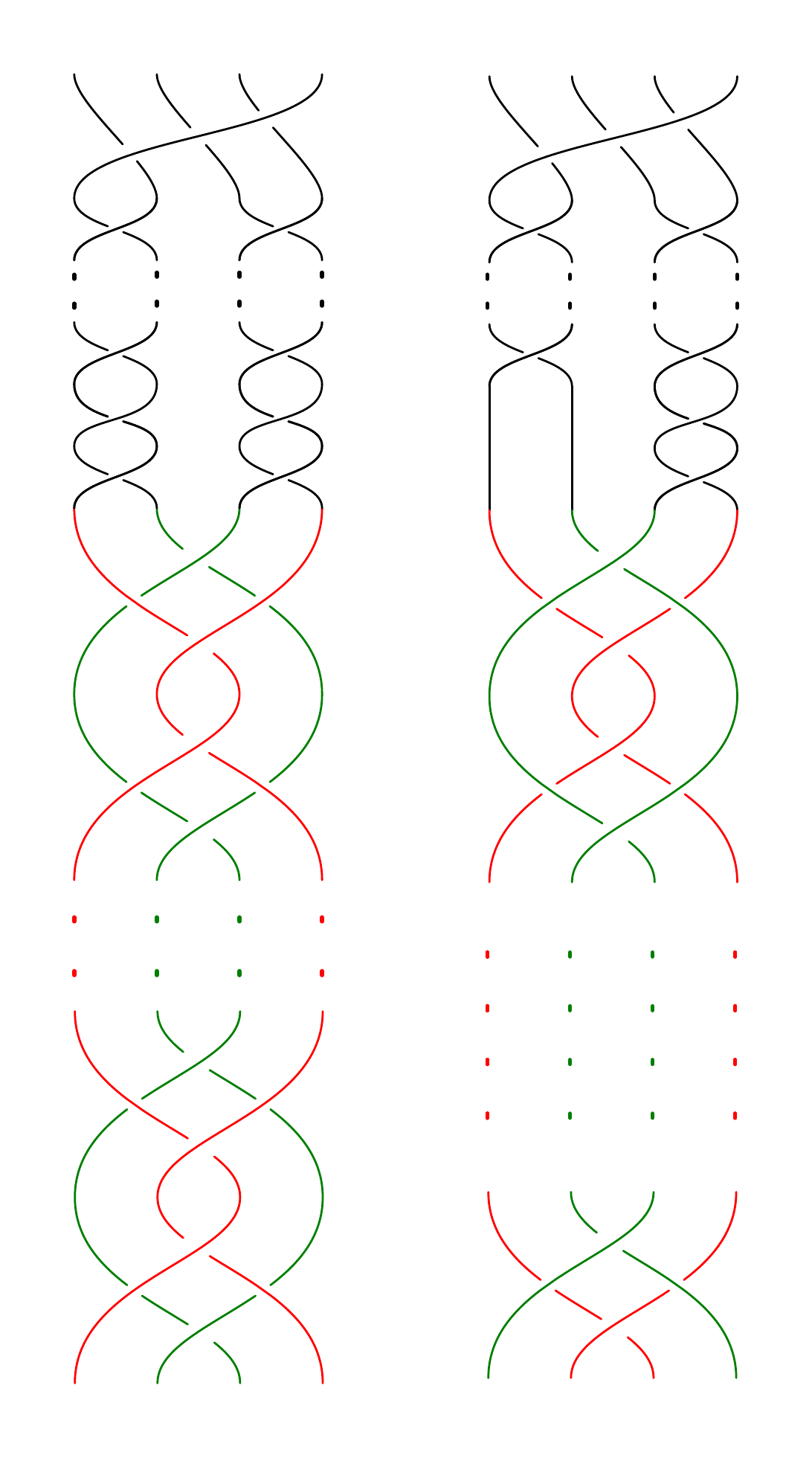
\end{figure}

Finally we observe that the braid closure of this is the connected sum \[T_{2,2n+1} \,  \# \, T_{2,2n+1}.\]
 To see this, we must distinguish the cases $n$ even and $n$ odd. If $n$ is even, we start with the braid closure of the left hand diagram in Figure~\ref{both braids}. We can flip the green strands in the braid closure to the top, passing behind everything else; see Figure~\ref{final isotopy}. Notice that this flipping yields a new crossing between the two flipped strands.

\begin{figure}[h!]
\caption{An isotopy from the braid closure to the connected sum $T_{2,2n+1} \,  \# \, T_{2,2n+1}$. The green strands pass behind everything else.}
\label{final isotopy}
\def\svgwidth{0.8\columnwidth}
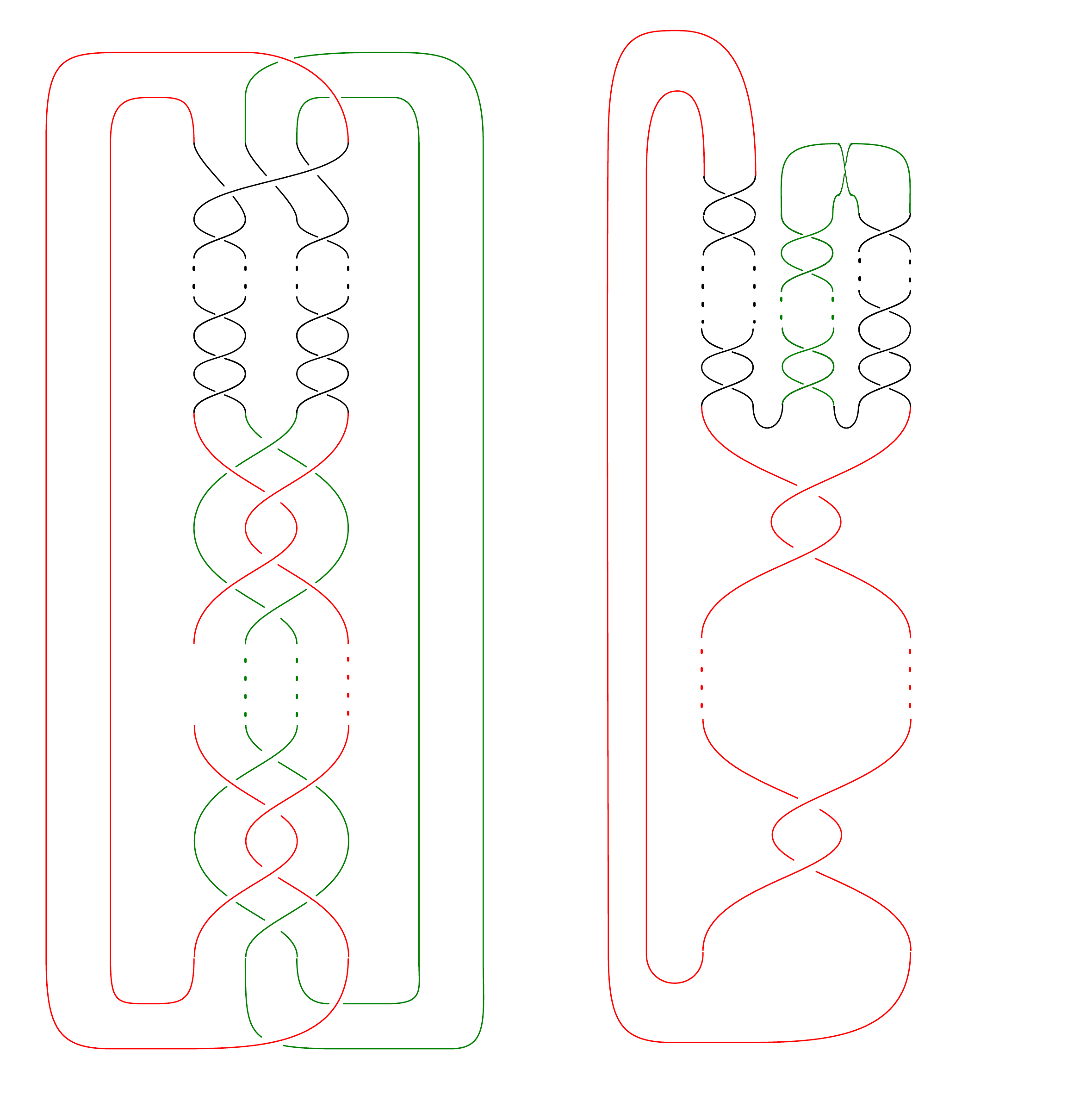
\end{figure}

The case where $n$ is odd is entirely analogous. In the braid closure of the right hand braid of Figure~\ref{both braids}, we can flip the red strands behind everything else. This also resolves the apparent asymmetry in the top of the braid we have started with.

\end{proof}
\section{Proof of the main result}
Theorem~\ref{thm:main} is an immediate consequence of Propositions~\ref{lower bound}, \ref{abe-kishimoto}, and~\ref{upper bound}. The reformulation that $\alt(K)=\lfloor \frac{1}{3}g(K)\rfloor$ is an easy computation that follows from the formula of the genus of a torus knot, given by
\[
	g(T_{p,q}) = \frac{(p-1)(q-1)}{2} \, ,
\]
for $p, q > 1$ coprime integers.

\section{Perspectives}
It is natural to wonder what the alternating numbers for torus knots of higher braid index are. Even the asymptotic behavior is unclear. To make this precise we set
\[
a_p =\lim_{n\to \infty}\frac{\alt(T_{p,i+ np})}{n}\]
for $p \geq 2, 0 \leq i < p$.
In fact, it is clear that $\limsup \alt(T_{p,i+np})/n$ exists. However, it follows from \cite{Feller_14_GordianAdjacency} that one has
\[
	\left| \alt(T_{p,k}) - \alt(T_{p,l})	\right| \leq \frac{p-1}{2} | k-l | \, ,
\]
showing that the above limit exists and that it is independent of $i$. Motivated by our geometric construction and the nature of the lower bounds, we expect that
$n \mapsto \alt(T_{p,i+ np})$ is an affine function for each $i$, and so $a_p$ would be the slope of this function, or, equivalently, the number of crossing changes needed for each additional `full twist'.

In this setup, Kanenobu's lower bound~\cite{Kanenobu_10_UpperBoundForAltOfTorusKnots}, which he obtained using Abe's lower bound~\cite{Abe_09_EstOfAltOfTorusKnot} and Gordon, Litherland and Murasugi's signature calculation~\cite{GLM}, yields
\begin{equation}\label{eq:lowerboundasympt}
\begin{split}
\frac{(p-1)^2}{4} & \leq a_p \ \ \mbox{for $p$ odd, and } \\
\frac{(p-2)p}{4} & \leq a_p \ \ \mbox{for $p$ even. }
\end{split}
\end{equation}
In fact, using the $|\tau+\upsilon|$-bound from Section~\ref{sec:lowerbounds}, one can recover~\eqref{eq:lowerboundasympt}. In particular, using the $\upsilon$-invariant, one does not get a better asymptotic lower bound than Abe's bound using the signature and the $\tau$-invariant.

Kanenobu's upper bound on the alternating number of torus knots of braid index 3 (compare Propostion~\ref{abe-kishimoto}) shows that \eqref{eq:lowerboundasympt} is an equality for $p\leq3$ and our main result Theorem~\ref{thm:main} shows that \eqref{eq:lowerboundasympt} is an equality for $p=4$ as well. The values $a_p$ for $p\geq 5$ seem out of reach at the moment.
However, maybe the geometrically constructed upper bounds generalize such that in the future the following question can be answered in the positive.
\begin{question}
Is \eqref{eq:lowerboundasympt} an equality for all positive integers $p$?
\end{question}
As a further hint in this direction, we notice that the lower bound in \eqref{eq:lowerboundasympt}, for $p$ even, is equal to the number of `band crossings' in a full twist for a suitable generalization of Figure \ref{crossingchanges_fulltwist}.

\bibliographystyle{siam}
\def\cprime{$'$}

\end{document}